\documentclass{article}
\usepackage{graphicx} 
\usepackage{amsmath,amsthm,amssymb}
\usepackage{mathtools}
\usepackage[top=2.5cm, bottom=2.5cm, left=3cm, right=3cm]{geometry}
\newtheorem{theorem}{Theorem}  
\newtheorem{lemma}[theorem]{Lemma}     
\newtheorem{definition}{Definition}
\title{Pick-up Sticks and the General Fibonacci Numbers }
\author{Tian Cao Lin}
\date{January 2026}

\begin{document}

\maketitle
\begin{abstract}
    In the article by Edward et al. \cite{Sudbury2025}, it was shown that the probability that no three sticks randomly chosen from the unit interval can form a triangle equals the reciprocal of the product of the first $n$ Fibonacci numbers. The authors further suggested a generalization to higher \((k+1)\)-gons \((k\ge 4)\). 
    This note proves that, indeed, for any \(k\ge 2\), the probability that no $k+1$ of $n$ independent uniform $[0,1]$ lengths can form a $(k+1)$-gon is expressed as a product whose factors involve a $k$-step Fibonacci-type recurrence. The method follows closely the original argument of \cite{Sudbury2025}, while making explicit the algebraic structure that governs the general case.
\end{abstract}

\section{ Introduction}
The problem of random sticks is one of the classic problems in geometric probability with a long history, dating back to the ``broken stick problem'' posed by Lemoine in 1873~\cite{Lemoine1873}: a stick of unit length is broken at two random points, and one asks for the probability that the three resulting segments can form a triangle. Since then, many generalizations of this basic setting have been studied. D'Andrea and Gomez~\cite{DAngelis2006} extended the problem to the case of \(n\) pieces, proving that the probability that a stick broken randomly into \(n\) pieces can form an \(n\)-gon is \(1 - n/2^{n-1}\). More generally, Verreault~\cite{Verreault2022a, Verreault2022b} and Mukerjee~\cite{Mukerjee2024} systematically studied the ``\(k\)-gon'' version: given a stick broken into \(n\) pieces, what is the probability that every subset of \(k\) pieces can form a \(k\)-gon? Remarkably, these probability formulas are deeply connected with the \((k-1)\)-step Fibonacci numbers.

In a different direction, Petersen and Tenner~\cite{Petersen2020} introduced a model that is fundamentally different in its assumptions --- the \textbf{pick-up stick model}. In this model, the lengths of \(n\) sticks are drawn independently from the uniform distribution on \([0,1]\), and there is no constraint that they sum to one. They proved that the probability that \(n\) such sticks cannot form an \(n\)-gon is \(\frac{1}{(n-1)!}\)~\cite{Petersen2020}.

In the article \cite{Sudbury2025}, the authors proved that the probability that no three sticks can form a triangle is the reciprocal of the product of the first \(n\) Fibonacci numbers. They extended this method to quadrilaterals, where the probability expression involves the Tribonacci sequence and includes a correction term, and they discussed further extensions to general \(k+1\)-gons for \(k\geq2\). Their results hint at a general pattern: the avoidance of \(k+1\)-gons corresponds to a \(k\)-step Fibonacci-type recurrence. This note proves this generalization.

The method used in this paper is essentially the same as that in the article \cite{Sudbury2025} .  
 
\section{Main result}

We first denote
\begin{align} \label{matrix A_k}
     A_k=\begin{pmatrix}
1& 0&\dots&0&k-1\\
1& 0&\dots&0&k-2\\
0&1&\dots&0&k-3\\
\vdots&\vdots&&\vdots&\vdots\\
0& 0&\dots&1&0
 \end{pmatrix}\in \mathbb{R}^{k\times k}.
 \end{align} 
 and define the recurrence that will appear.

\begin{definition}
For a fixed integer $k\ge 2$, the sequence $\bigl\{F_k^{l}\bigr\}_{l=2-k}^{\infty}$ is called a \emph{$k$-step Fibonacci‑type recurrence} if it satisfies
\[
F_k^{l}=\sum_{j=1}^{k} F_k^{l-j},\qquad l= 2,3,\dots,
\]
with the initial conditions
\[
F_k^{1}=1,\quad F_k^{0}=F_k^{-1}=\dots=F_k^{-k+2}=0 .
\]
\end{definition}
In particular, when \(k=2\), the sequence \(\{F_2^{l}\}_{l=1}^{\infty}\) is the well-known Fibonacci sequence. \textbf{Theorem 1} is our the main result.

\begin{theorem}\label{Mainresult}
If \(n\) real numbers are chosen uniformly from the unit interval \([0, 1]\), for \(k\geq 3\),
the probability that no \(k+1\) of the numbers can form a \(k+1\)-gons  is
\[P_n^{(k)}=\prod_{l=1}^{n-k+2}\frac{1}{F^l_k} \prod_{i=1}^{k-2}\frac{1}{F^{n-i+1}_k-\sum_{j=1}^{k-i-1}(k-i-j)F_k^{n-k+j}},\]
where \(F_k^l\) is the \(l\)-th term in  \(k\)-step Fibonacci-type recurrence.
\end{theorem}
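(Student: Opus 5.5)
Order the sticks as $x_1\le x_2\le\dots\le x_n$. Since $(k+1)$ sticks fail to form a polygon exactly when the longest is at least the sum of the others, and the "most dangerous" $k+1$-subsets are consecutive in sorted order, the event that no $k+1$ sticks form a $(k+1)$-gon is equivalent to
\[
x_{m}\ \ge\ x_{m-1}+x_{m-2}+\dots+x_{m-k}\qquad\text{for all } m=k+1,\dots,n.
\]
So $P_n^{(k)}=n!\,\mathrm{vol}(R_n)$, where $R_n\subset[0,1]^n$ is the polytope cut out by $0\le x_1\le\dots\le x_n\le 1$ together with these constraints. I will compute this volume by integrating in the order $x_n,x_{n-1},\dots$ (top down), or equivalently by the substitution $y_m=x_m-\sum_{j=1}^{k}x_{m-j}\ge 0$, following the triangle case of \cite{Sudbury2025}.

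\textbf{The linear change of variables.} Put $y_1=x_1$, $y_m=x_m-\sum_{j=1}^{\min(k,m-1)}x_{m-j}$, so that $x_m=\sum_{i\le m}c_{m-i}y_i$ with nonnegative coefficients. For $m\ge k+1$ these coefficients satisfy the $k$-step recurrence. The ordering constraints for $m\le k$ are not implied by the $y_i\ge0$, so the first $k$ coordinates must be handled separately: on them one only has $x_1\le\dots\le x_k$, and the constraint $x_{k+1}\ge x_1+\dots+x_k$ starts the recurrence. This is exactly where $A_k$ enters. I will track the vector of coefficients of the last $k$ "free" directions and show that one integration step acts on it by multiplication by $A_k$ (its first column shifts, its last column carries weights $k-1,k-2,\dots$ coming from the initial ordered block). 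The determinant is $1$, so $R_n$ becomes a simplex-like region $\{y\ge0,\ \sum_i a_i y_i\le 1\}$ intersected with the initial ordered cone. Its volume is then a product of reciprocals of the coefficients $a_i$, via the standard iterated formula $\int_{\sum a_iy_i\le1,y\ge0}dy=\frac{1}{n!\prod a_i}$, suitably modified for the ordered block.

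\textbf{Identifying the coefficients.} Inductively I will show that the coefficient of $y_i$ in $x_n$ is $F_k^{n-i+1}$ for the generic variables, which gives the factors $\prod_{l=1}^{n-k+2}1/F_k^l$. For the $k-2$ variables that live inside the initial ordered block, the coefficient is reduced. Rewriting $x_1\le\dots\le x_k$ via increments $z_1=x_1$, $z_j=x_j-x_{j-1}$ produces the corrections $F_k^{n-i+1}-\sum_{j=1}^{k-i-1}(k-i-j)F_k^{n-k+j}$, obtained by expanding the first rows of $A_k^{\,n-k}$ in terms of $F_k$. Lemma-style bookkeeping shows that the entries of $A_k^{m}$ are linear combinations of $F_k^{m+\cdot}$ with the coefficients $(k-i-j)$. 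After the change of variables the $n!$ cancels against the simplex volume, leaving exactly the stated product.

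\textbf{Main obstacle.} The delicate step is twofold. First, I must justify that the consecutive constraints suffice and that the region really is a simplex in the new coordinates. Second, I must pin down the closed form of the powers $A_k^m$, in particular that its last-column entries produce precisely $\sum_{j}(k-i-j)F_k^{n-k+j}$. I would first verify the cases $k=2$ and $k=3$ (recovering the Fibonacci and Tribonacci results of \cite{Sudbury2025}) and then prove the entry formula for $A_k^m$ by induction on $m$ using the recurrence.
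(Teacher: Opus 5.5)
Your architecture is sound and genuinely different from the paper's. The paper never works with the polytope in $[0,1]^n$. It uses the R\'enyi representation $U_{(i)}=(x_1+\dots+x_i)/S$ with i.i.d.\ exponential spacings and rewrites each constraint as $(k-1)\sum_{l\le i}x_l+(k-2)x_{i+1}+\dots+x_{i+k-2}\le x_{i+k}$. It then integrates $e^{-\sum x_i}$ from $x_n$ downward; each step contributes $1/R_k^l$ and updates the exponent by $R^{l+1}=A_kR^l$, and a separate lemma is needed to solve that matrix recursion.

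You instead use a unimodular substitution. Take increments $z_1=x_1$, $z_j=x_j-x_{j-1}$ for $j\le k$, and slacks $y_m=x_m-\sum_{j=1}^k x_{m-j}$ for $m>k$. Do not use your first choice $y_m=x_m-\sum_{j<m}x_j$ for $m\le k$, which has no sign on the region. Since $y_m\ge0$ already forces $x_m\ge x_{m-1}$ for $m>k$, the region is exactly the simplex $\{z\ge0,\ y\ge0,\ x_n\le1\}$. Hence $P_n^{(k)}=1/\prod a_i$, with $a_i$ the coefficients of $x_n$.

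This buys a lot. The product form is immediate; step-by-step integration of the uniform density would not factor anyway. The $k$-step recurrence is also visible directly: every coefficient, as a function of the top index $m$, satisfies $a(m)=\sum_{j=1}^k a(m-j)$ for $m>k$. So the slack $y_i$ contributes $F_k^{n-i+1}$ with no matrix lemma.

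The weak spot is routing the block coefficients through $A_k$. Your coordinates never produce $A_k$. It is the paper's spacing-coordinate version (a conjugate under cumulative sums) of a companion matrix of the recurrence. Its last column $(k-1,\dots,1,0)^T$ comes from writing every constraint in spacings, not from the initial ordered block. So your identification of the coefficient of $z_p$ with $(A_k^{n-k}R^1)_p$ is asserted, not shown.

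You do not need it. The coefficient of $z_p$ in $x_m$ is the $k$-step sequence $d_p$ with $d_p(m)=1$ for $p\le m\le k$ and $d_p(m)=0$ for $m<p$. The claimed closed form $D_p(m)=F_k^{m-p+1}-\sum_{j=1}^{k-p-1}(k-p-j)F_k^{m-k+j}$ is a combination of shifts of $F_k$, each obeying the recurrence for $m\ge k+1$. So it suffices to check $D_p(m)=d_p(m)$ for $m=1,\dots,k$:
\begin{itemize}
\item For $m\le p$, all indices lie in the zero range except $F_k^1$ at $m=p$.
\item For $m=p+s$ with $1\le s\le k-p$, the check reduces to $F_k^{s+1}-\sum_{t=1}^{s-1}(s-t)F_k^t=1$. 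This follows from $F_k^t=2^{t-2}$ for $2\le t\le k+1$.
\end{itemize}
Then $d_k(n)=F_k^{n-k+1}$, $d_{k-1}(n)=F_k^{n-k+2}$, and $d_p(n)$ for $p\le k-2$ are exactly the factors in the theorem.
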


Before proving the theorem we state a lemma that captures the algebraic relations generated by the integration process.

\begin{lemma}\label{lemma A}
Let matrix \(A_k\) defined by (\ref{matrix A_k}), \(R^1=(1,1,\dots,1)^T\in \mathbb{R}^k\) and \(R^l=(R_1^l,R^l_2,\dots,R_k^l)^T\in\mathbb{R}^k\) satisfy
    \[R^l=A^{l-1}_kR^{1},l=1,2,3,\dots\]
Then, \[R_k^l=F_k^l,R_{k-1}^l=F^{l+1}_k\] and 
\[\quad R_{i}^l=F^{l+k-i}_k-\sum_{j=1}^{k-i-1}(k-i-j)F_k^{l+j-1},i=1,2,\dots,k-2.\]
 where \(F_k^l\) \(((l=1,2,3,\dots,)\) is \(k\)-step Fibonacci-type recurrence.
\end{lemma}
Now, assuming the lemma holds, we prove the main theorem.

\begin{proof}
Let $U_{(1)}\le U_{(2)}\le\dots\le U_{(n)}$ be the order statistics of $U_1,\dots,U_n$.  The condition that no $k+1$ lengths can form a $(k+1)$-gon is equivalent to
\begin{align}\label{first f}
\sum_{i=l}^{l+k-1} U_{(i)}\leq U_{(l+k)},\qquad \text{for all } 1\leq l\leq n-k.
\end{align}
A classical fact \cite{johnson1970continuous} states that the order statistics of a uniform sample can be represented via normalized sums of independent exponential variables, if \(x_1,\dots,x_{n+1}\stackrel{\mathrm{i.i.d.}}{\sim}\operatorname{Exp}(1)\) with density \(f(x)=e^{-x}\) and $S=x_1+\dots+x_{n+1}$, then
\[
U_{(i)}\stackrel{d}{=}\frac{x_1+\dots+x_i}{S},\qquad i=1,\dots,n .
\]
Substituting this into (\ref{first f}) yields

\[
\begin{aligned}
\sum_{l=0}^{k-1}\frac{x_{1}+\cdots+x_{i+l}}{S}&\leq\frac{x_{1}+\cdots+x_{i+k}}{S} \\
 (k-1)\sum_{ l=1}^ix_l+(k-2)&x_{i+1}+\dots +x_{i+k-2}\leq x_{i+k},
\end{aligned}
\]
where \(1\leq i\leq n-k\).  With \(x_{l}\) \((l=1,2,\dots,k)\) being unbounded between \(0\) and infinity.  

The successive inequalities define a nested region in \(\mathbb{R}^{n}\), and the required probability corresponds to the volume of this region under the exponential density. To evaluate it, we integrate successively from \(x_{n}\) down to \(x_{1}\), each step producing a factor determined by the recurrence relation that will lead to the \(k\) step Fibonacci pattern.

To make the subsequent formulas more concise, we introduce the notation \(L_j\), that
\begin{align} \label{simplnotations}
L_{j}= (k-1)\sum_{ l=1}^{j-k}x_l+(k-2)x_{j-k+1}+\dots +x_{j-2},
\end{align}
for \(j=k+1,k+2,\dots,n\). And \(L_j=0\) for \(j=1,2,\dots,k\). 

the probability we need to calculate is

\begin{align*}
    P_n^{(k)}=\int_{L_1}^\infty \int_{L_2}^\infty&\dots\int_{L_n}^\infty e^{-\sum_{i=1}^n x_i}dx_n\dots dx_2dx_1\\
    =\int_{L_1}^\infty\int_{L_2}^\infty&\cdots \int^\infty_{L_{n-1}} e^{-k\sum_{ l=1}^{n-k} x_l+(k-1)x_{n-k+1}+\dots +2x_{n-2}+x_{n-1}}dx_{n-1}\dots dx_2dx_1
\end{align*}
For convenience, we introduce the vector \( R^1,R^2\in \mathbb{R}^k \) to rewrite this integral, 

\begin{align*}
    P_n^{(k)}=\int_{L_1}^\infty \int_{L_2}^\infty&\dots\int_{L_n}^\infty e^{-R_1^1\sum_{l=1}^{n-k+1} x_l-R_2^1x_{n-k+2}-\dots -R_k^1x_n}dx_n\dots dx_2dx_1\\
    =\int_{L_1}^\infty\int_{L_2}^\infty&\cdots \int^\infty_{L_{n-1}} e^{-R_1^2\sum_{ l=1}^{n-k} x_{l}-R_2^2x_{n-k+1}-\dots -R_{k-1}^2x_{n-2}-R_k^2x_{n-1}}dx_{n-1}\dots dx_2dx_1
\end{align*}

Here, the last term \(x_{n-k}\) in the \(\sum\)-summation has been taken out separately to compensate for the missing integrated \(x_n\).
Let the integral obtained after \(j\) integrations be denoted \(I_j^{(k)}\), for example,
\begin{align*}
    I_1^{(k)}=&\int_{L_n}^\infty e^{-R_1^1(\sum_{t=1}^{n-k} x_t+x_{n-k+1})-R_2^1x_{n-k+2}-\dots -R_k^1x_n}dx_n
    \\=&\frac{1}{R_k^1}e^{-R_1^2\sum_{ t=1}^{n-k} x_{t}-R_2^2x_{n-k+1}-\dots -R_{k-1}^2x_{n-2}-R_k^2x_{n-1}}.
    \end{align*}
where \(L_n\) is given by formula (\ref{simplnotations}) and 
\[\left\{
\begin{aligned}
    R_1^2&=R_1^1+(k-1)R_k^1,\\
    R^2_2&=R_1^1+(k-2)R_k^1,\\
    R_3^2&=R_2^1+(k-3)R_k^1,\\
    \cdots\\
    R^2_k&=R_{k-1}^1.
\end{aligned}
\right.\]

Now extend the relationship between \( R^1 \) and \( R ^2\) to the general case, i.e., the properties that \(R^l\) possesses after \(l\) integrations. By induction, 
\begin{align*}
    I_l^{(k)}&=\prod_{t=1}^{l-1}\frac{1}{R^t_k}\int_{L_{n-l+1}}^\infty e^{-R_1^{l}(\sum_{t=1}^{n-l-k+1} x_t+x_{n-l-k+2})-R_2^{l}x_{n-l-k+3}-\dots -R_k^{l}x_{n-l+1}}dx_{n-l+1}
    \\&=\prod_{t=1}^l\frac{1}{R_k^t}e^{-R_1^{l+1}\sum_{ t=1}^{n-l-k+1} x_{t}-R_2^{l+1}x_{n-l-k+2}-\dots -R_{k-1}^{l+1}x_{n-l-1}-R_k^{l+1}x_{n-l}}.
\end{align*}
In the \(\Sigma\)-summation term of the second integrand, a factor is extracted outside, ensuring that the exponent always contains \(k\) terms.  Then 
\begin{align*}
R_1^{l+1}\sum_{ t=1}^{n-l-k+1} x_{t}&+R_2^{l+1}x_{n-l-k+2}+\dots +R_{k-1}^{l+1}x_{n-l-1}+R_k^{l+1}x_{n-l}\\&=R^l_k\times L_{n-l+1}+R_1^{l}(\sum_{t=1}^{n-l-k+1} x_t+x_{n-l-k+2})+R_2^{l}x_{n-l-k+3}+\dots +R_{k-1}^{l}x_{n-l}.
\end{align*}
By observing the relationship between the expression of \(L_{n-l+1}\) and the exponential term, we can obtain that\( R^l \) satisfies the following recurrence relation, 
\[\left\{
\begin{aligned}
    R_1^{l+1}&=R_1^{l}+(k-1)R_k^{l},\\
    R_2^{l+1}&=R_1^{l}+(k-2)R_k^{l},\\
    R_3^{l+1}&=R_2^{l}+(k-3)R_k^{l},\\
    \cdots\\
    R_k^{l+1}&=R_{k-1}^{l}.
\end{aligned}\right.\]
 The transformation matrix \(A_k\) we obtained here is identical to the matrix in (\ref{matrix A_k}).

Denote \(R^l=(R_1^l,R_2^l,\dots,R_k^l)^T\), we have 
\[R^{l+1}=A_kR^l=A^l_kR^1.\]
This relationship is the same as the condition stated in the lemma \ref{lemma A}. After \(n-k\) integrations we are left with

\begin{align*}
    I_{n-k}^{(k)}=\prod _{l=1}^{n-k}\frac{1}{R^k}e^{{-R_1^{n-k+1} x_{1}-R_2^{n-k+1}x_{2}-\dots -R_{k-1}^{n-k+1}x_{k-1}-R_k^{n-k+1}x_{k}}.}
\end{align*}
Then, by integrating successively with respect to \( x_{n-k},x_{n-1},\dots x_2 \) and \( x_1 \), we obtain
\[ P_n^{(k)}=\prod_{l=1}^{n-k}\frac{1}{R^l_k} \prod_{i=1}^k\frac{1}{R^{n-k+1}_i}.\]
Note that in the second product, the terms involving \(R^{n-k+1}_k\) and \(R^{n-k+1}_{k-1} = R^{n-k+2}_k\) can be incorporated into the first product. Applying the lemma \ref{lemma A}, we complete the proof. 
\end{proof}
Now we prove the lemma.

\begin{proof}
\(R^{l}=A^{l-1}_kR^1\) tell us that \(R^{l}=A_kR^{l-1}\), that is to say

\[\left\{
\begin{aligned}
  R_1^l&=R_1^{l-1}+(k-1)R_k^{l-1},\\
    R_2^l&=R_1^{l-1}+(k-2)R_k^{l-1},\\
    R_3^l&=R_2^{l-1}+(k-3)R_k^{l-1},\\
    \cdots\\
    R_k^l&=R_{k-1}^{l-1}.
\end{aligned}
\right.\]

then we can get 
\begin{align*}
    R^{l+1}_k&=R^{l}_{k-1}\\&=R_{k-2}^{l-1}+R_k^{l-1}\\
    &=R^{l-2}_{k-3}+2R^{l-2}_k+R_k^{l-1}\\
    &\cdots\\
    &=R_{1}^{l-k+2}+\sum_{i=1}^{k-2} iR_k^{l-i}.
\end{align*}
other hand, from the first two formulas, we can get
\[R_2^l-R_1^l=-R_k^{l-1},\]
which means 
\[R_1^l=R_k^{l-1}+R_2^l.\]
Then we get 
\begin{align*}
    R_k^{l+1}&=R_1^{l-k+2}+\sum_{i=1}^{k-2}iR_k^{l-i}\\
    &=R_k^{l+1-k}+R_2^{l-k+2}+(k-2)R^{l-k+2}_k+\sum_{i=1}^{k-3} iR_k^{l-i}\\
    &=R_k^{l+1-k}+R_k^{l-k+2}+R_3^{l-k+3}+\sum_{i=1}^{k-3} iR_k^{l-i}\\
    &\cdots\\
    &=\sum_{i=1}^{k} R_k^{l+1-i} .
\end{align*}
Let \(R^{-k+2}=(1,0,0,\dots,0)^T\in \mathbb{R}^k\), then \[R^{t+2-k}=A^t_k R^{2-k}=(1,1,\dots,1,0,0,\dots,0)^T\in \mathbb{R}^k, t<{k-1}.\]
the elements from the first to the \(t+1\)-th position of the vector are all \(1\), and the remaining elements are \(0\). and 
\[R^{1}=A_k^{k-1}R^{2-k}.\]
\(A_k\) is invertible, which show that \(R_k^{-k+2}=\cdots=R_k^{0}=0\). Then this proves that \( R_k^l \) \((l=1,2,\dots)\) is precisely the \(k\)-step Fibonacci-type recurrence by definition. That's to say, \( R_k^l=F_k^l \) \((l=1,2,\dots)\).

Now we  use \(R_k^{m}\) \((m=1,2,\dots)\) to present \(R_i^{l}\). Come back to the relationship,
\[\left\{
\begin{aligned}
  R_1^l&=R_1^{l-1}+(k-1)R_k^{l-1},\\
    R_2^l&=R_1^{l-1}+(k-2)R_k^{l-1},\\
    R_3^l&=R_2^{l-1}+(k-3)R_k^{l-1},\\
    \cdots\\
    R_k^l&=R_{k-1}^{l-1}.
\end{aligned}
\right.\]

Firstly, we get 
\[R_{k-1}^l=R_k^{l+1}=F_k^{l+1},\]
and \(R_{k-2}^l=R_{k-1}^{l+1}-R_{k}^l=R_k^{l+2}-R_k^l\), similarly
\[R_{k-3}^l=R^{l+1}_{k-2}-2R_k^l=R_k^{l+3}-R^{l+1}_k-2R_k^l,\]
and
\[R_{k-4}^l=R_{k-3}^{l+1}-3R_k^l=R^{l+4}_k-3R_k^l-2R_k^{l+1}-R^{l+2}_k.\]
Since
\[R_{i}^l=R_{i+1}^{l+1}-(k-i-1)R_{k}^l,i\leq k-1.\]
By induction, we obtain
\begin{align*}
    R_{i}^l&=R^{l+k-i}_k-\sum_{j=1}^{k-i-1}(k-i-j)R_k^{l+j-1}\\ &=F^{l+k-i}_k-\sum_{j=1}^{k-i-1}(k-i-j)F_k^{l+j-1},i\leq k-2.
\end{align*}
\end{proof}

Example: \(k=4\), that is, the case of a pentagon, 
then
\begin{align*}
    P^{(4)}_n&=\int_0^\infty \int_0^\infty\dots\int_{3\sum_{ l=1}^{n-4} x_{l}+2x_{n-3} +x_{n-2}}^\infty e^{-\sum_{i=1}^n x_i}dx_n\dots dx_2dx_1\\
    &=\int_0^\infty\int_0^\infty\cdots \int^\infty_{3\sum_{ l=1}^{n-5} x_{l}+2x_{n-4}+x_{n-3}} e^{-4\sum_{ l=1}^{n-4} x_l+3x_{n-3} +2x_{n-2}+x_{n-1}}dx_{n-1}\dots dx_2dx_1.
\end{align*}
that means
\[\left\{\begin{aligned}
    R_1^{l+1}&=R_1^l+3R_k^l,\\
    R_2^{l+1}&=R_1^l+2R_k^l,\\
    R_3^{l+1}&=R_2^l+R_k^l,\\
    R_k^{l+1}&=R_3^l.
\end{aligned}
\right.
\]
and 
\[ A_4=\begin{pmatrix}
1& 0&0&3\\
1& 0&0&2\\
0&1&0&1\\
0& 0&1&0
 \end{pmatrix}\]
then  
\[P_n^{(4)}=\frac{1}{(R^{n-1}_4-R^{n-3}_4)(R^n_4-R^{n-2}_4-2R^{n-3})}\prod_{i=1}^{n-2}\frac{1}{R^i_4} \]
and \(R_4^i\), \(i=1,2,\dots\) is Tetranacci numbers, with \(R_4^i=R_4^{i-1}+R_4^{i-2}+R_4^{i-3}+R_4^{i-4}\) and \(R_4^1=R_4^2=1\), \(R_4^3=2\), \(R_4^3=4\). One may verify the first few values by direct integration for small \(n\).

\section{Conclusion}
We have shown that the probability that $n$ independent uniform \([0,1]\) lengths contain no \((k+1)\) that can form a \((k+1)\)-gon is governed by a $k$-step Fibonacci‑type recurrence.  This completely resolves the generalization suggested in \cite{Sudbury2025} and reveals a direct combinatorial‑geometric meaning of these recurrences in the context of random lengths.

\bibliographystyle{plain} 
\bibliography{re}
 
\end{document}